\documentclass[12pt]{amsart}

\usepackage{amsfonts}
\usepackage{amscd}
\usepackage{amssymb}

\allowdisplaybreaks
\usepackage{graphicx}
\usepackage{xcolor}
\usepackage{comment}
\usepackage{array}
\usepackage{tikz-cd}
\usepackage{cite}
\usepackage{enumerate,enumitem,hyperref,cleveref}
\hypersetup{colorlinks,linkcolor={blue},citecolor={blue},urlcolor={blue}}

\newtheorem{thm}{Theorem}[section]

\newtheorem{prop}[thm]{Proposition}

\theoremstyle{remark}
\newtheorem{rem}[thm]{Remark}

\numberwithin{equation}{section}

\usepackage{color}

\title[Fractional Dunkl heat equation]
{Large time behaviour of the fractional heat equation associated with the Dunkl Laplacian}

\author[S. Mukherjee]{Suman Mukherjee}

\address{Department of Mathematics, Indian Institute of Technology Bombay, Powai, Mumbai--400076, India.}
\email{sumanmukherjee822@gmail.com}

\keywords{Dunkl Laplacian, Fractional Dunkl heat equation, Non-linear Dunkl heat equation, Asymptotic behaviour}
\subjclass[2020]{Primary: 35B40, 42B37. Secondary: 35R11, 35K55}

\begin{document}

\maketitle
\begin{abstract} 
We consider the fractional heat equation associated with the Dunkl Laplacian and prove that the weak solutions to this equation converge to the fundamental solution as time becomes large, provided the initial data is an integrable function with respect to the associated measure. As an application, we also prove a similar result for the corresponding nonlinear equation.
\end{abstract}
  

\section{Introduction} \label{Sec-intro} 

In \cite{VazquezABFTFHEITES, VazquezABMFTHE}, V\'{a}zquez studied fractional heat equations of the form  
$$\partial_t u + (-\Delta)^{\alpha} u = 0,$$
where $(-\Delta)^{\alpha}$ denotes the fractional Laplacian of order $\alpha$ and $0 < \alpha \leq 1$. He established the asymptotic convergence of weak solutions to the corresponding fundamental solution. These results have important implications for the study of heat equations of both integer and fractional order. In particular, the papers themselves use these findings to study the Fokker--Planck and the Ornstein--Uhlenbeck equations. Moreover, these long-time asymptotic convergence results are closely connected to the `Central Limit Theorem' of probability in the PDE setting; see \cite{VazquezABMFTHE} for further details.

\par For these reasons, the asymptotic behavior of heat equations has recently become an important focus in analysis, and these results have been generalized to several abstract settings. V\'{a}zquez \cite{VazquezABFTHEOHS} himself considered the asymptotic behavior of the heat equation in multidimensional hyperbolic space for integrable data. This line of research was later extended by Anker, Papageorgiou, and Zhang \cite{AnkerABOSTTHEONSS}, who investigated the asymptotic behavior of solutions to the heat equation on Riemannian symmetric spaces of noncompact type. Papageorgiou \cite{PapageorgiouABOSTTEPFTFLONSS} studied the asymptotic behavior of solutions to the extension problem for the fractional Laplacian on noncompact symmetric spaces. In another work \cite{PapageorgiouLTBOTFOORTTFLCRM}, she examined the large-time behavior of two families of operators: one arising from the Caffarelli--Silvestre extension and the other from the fractional heat equation on certain Riemannian manifolds. More recently, Naik, Ray, and Sarkar \cite{NaikLABOSOFTFHEORSSONT} investigated the asymptotic behavior of solutions to the heat equation and its fractional counterpart on Riemannian symmetric spaces of noncompact type.

\par With this background, we now shift our attention to a different direction. Dunkl theory, introduced in 1989 through the pioneering work of Charles F. Dunkl \cite{Dunkl}, represents a deformation of Euclidean Fourier analysis that has attracted significant attention from researchers. At the core of this theory lie the Dunkl operators, which serve as analogs to classical partial derivatives in the Dunkl framework. These operators fulfill a dual purpose: they provide algebraic tools for exploring special functions and establish the analytical foundation of the Dunkl theory.
\par One of the fundamental operators in this setting is the Dunkl Laplacian $\Delta_k$, given by 
$$\Delta_k f(x):= \sum\limits_{j=1}^d T_j^2 f(x)= \Delta f(x)
+ \sum_{\lambda\in R} k(\lambda)
\left(
\frac{\langle\nabla f(x),\lambda\rangle}{\langle\lambda,x\rangle}
-\frac{|\lambda |^2}{2}
\frac{f(x)-f(\sigma_\lambda x)}{\langle\lambda,x\rangle^2}
\right),$$ 
where the notations are explained in Section \ref{Sec-prel}. This operator serves as a central tool for the analysis of semigroups and partial differential equations in the Dunkl framework. For instance, R\"osler introduced \cite{RoslerGHPATHEFDO} the following initial value problem for the heat equation associated with the Dunkl Laplacian
\begin{align*}
    \begin{cases}
       \partial_t u(x,t) -\Delta_k \,u(x,t) &= 0,\, \quad (x,t)\in \mathbb{R}^d \times (0,\infty),\\
       \hfill u(x,0) &= u_0(x).
    \end{cases}
\end{align*}
The solution to this problem can be expressed as the Dunkl convolution of the Dunkl heat kernel $h_t$ with the initial data $u_0$. The Dunkl heat kernel and the Dunkl heat semigroup has been extensively studied in the literature for its applications in harmonic analysis and PDEs in the Dunkl setting. For details, we refer to the references \cite{HejnaHMT, HejnaHFCHF, HejnaUALBFLPSFIDS, HejnaUALBFTDHK, HejnaHSFTDHO, HejnaROADFTHSHITRDS, HejnaRODTONRK, HanRTACITDS, HanLATLSCIDS, LiDFSFEFDO, SumanDPAFLRFTDL}.
\par More generally, for $0 < \alpha \leq 1$, one can consider the fractional Dunkl heat equation
\begin{align}\label{linear frac Dunkl heat eq}
    \begin{cases}
       \partial_t u(x,t) + (-\Delta_k)^{\alpha} \,u(x,t) &= 0,\, \quad (x,t)\in \mathbb{R}^d \times (0,\infty),\\
       \hfill u(x,0) &= u_0(x).
    \end{cases}
    \end{align}
Similar to the previous case, its solution can be expressed as the Dunkl convolution of the fractional Dunkl heat kernel $h_{t,\alpha}$ with the initial value $u_0$. The fractional Dunkl heat semigroup has not been extensively studied in the literature; however, it has recently attracted significant attention and is expected to receive even greater focus in the future. For instance, as demonstrated by the author of this article and Thangavelu \cite{SumanOTBODM}, this fractional semigroup offers a powerful approach for establishing sharp multiplier theorems for the Dunkl transform.

\par Recalling the discussion at the beginning of the paper, it is therefore natural to ask whether similar asymptotic behavior can be studied in the Dunkl context. In this paper, we aim to advance the theory of PDEs associated with Dunkl operators by investigating the large-time behavior of solutions to the fractional Dunkl heat equation \eqref{linear frac Dunkl heat eq}. We study weak solutions of this equation and show that, as $t \to \infty$, they converge to the fundamental solution, provided that the initial data $u_0$ is an integrable function. Formally, our main theorem can be stated as follows.

\begin{thm}\label{main thm linear part}
Let $1\leq p\leq \infty$ and $u_0 \in L^1(d\mu_k)$ be such that 
$$\int_{\mathbb{R}^d}u_0(x)\,d\mu_k(x)=M\neq 0.$$
Then
\begin{equation*}\label{linear lp assymp behav}
\lim\limits_{t\to \infty} t^{d_k/(2\,\alpha\,p')} \|h_{t,\alpha}\ast_k u_0 - Mh_{t,\alpha}\|_{L^p(d\mu_k)}  =0.
\end{equation*}
\end{thm}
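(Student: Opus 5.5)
We prove the statement by the standard V\'azquez density argument, adapted to the Dunkl setting. Throughout we use the basic properties of the fractional Dunkl heat kernel $h_{t,\alpha}$.

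\emph{Kernel properties.} The kernel is defined through the Dunkl transform by $\mathcal{F}_k h_{t,\alpha}(\xi)=e^{-t|\xi|^{2\alpha}}$. We need:
\begin{itemize}
\item[(i)] the scaling identity $h_{t,\alpha}(x)=t^{-d_k/(2\alpha)}h_{1,\alpha}(t^{-1/(2\alpha)}x)$, which holds because $d\mu_k$ is homogeneous of degree $d_k$;
\item[(ii)] $h_{t,\alpha}\ge 0$ and $\int h_{t,\alpha}\,d\mu_k=1$, by subordination to the Dunkl heat kernel $h_t$ for $0<\alpha\le 1$;
\item[(iii)] $h_{1,\alpha}\in L^1\cap L^\infty(d\mu_k)$, hence $\|h_{t,\alpha}\|_{L^p}=t^{-d_k/(2\alpha p')}\|h_{1,\alpha}\|_{L^p}$;
\item[(iv)] the Young-type inequality $\|f\ast_k g\|_{L^p}\le \|f\|_{L^1}\|g\|_{L^p}$ whenever $g$ is radial (or, more generally, for the heat kernel), so that $\|h_{t,\alpha}\ast_k f\|_{L^p}\le \|h_{t,\alpha}\|_{L^p}\|f\|_{L^1}$.
\end{itemize}

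\emph{Step 1: reduction to compactly supported data.} Given $\varepsilon>0$, choose $v_0\in C_c(\mathbb{R}^d)$ with $\|u_0-v_0\|_{L^1}<\varepsilon$. Set $M'=\int v_0\,d\mu_k$, so that $|M-M'|<\varepsilon$. By (iv) and (iii),
$$t^{d_k/(2\alpha p')}\bigl(\|h_{t,\alpha}\ast_k(u_0-v_0)\|_{L^p}+|M-M'|\,\|h_{t,\alpha}\|_{L^p}\bigr)\le 2\varepsilon\|h_{1,\alpha}\|_{L^p}.$$
It therefore suffices to prove the theorem for $v_0$ supported in a ball $B(0,R)$.

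\emph{Step 2: pointwise estimate for compactly supported data.} Write
$$h_{t,\alpha}\ast_k v_0(x)-M'h_{t,\alpha}(x)=\int \bigl(\tau_{-y}h_{t,\alpha}(x)-h_{t,\alpha}(x)\bigr)v_0(y)\,d\mu_k(y),$$
using the fact that the Dunkl translation of a radial function is given by R\"osler's formula
$$\tau_{-y}h(x)=\int \tilde h\Bigl(\sqrt{|x|^2+|y|^2-2\langle x,\eta\rangle}\Bigr)\,d\mu_x(\eta),$$
where $\mu_x$ is a probability measure supported in the convex hull of the orbit $G\cdot x$. After rescaling $x=t^{1/(2\alpha)}z$ and $y=t^{1/(2\alpha)}w$ with $|w|\le Rt^{-1/(2\alpha)}\to 0$, the quantity
$$t^{d_k/(2\alpha p')}\|\cdot\|_{L^p}$$
becomes the $L^p$ norm, in the variable $z$, of
$$\int\Bigl(\tau_{-w}h_{1,\alpha}(z)-h_{1,\alpha}(z)\Bigr)\,v_0\bigl(t^{1/(2\alpha)}w\bigr)\,t^{d_k/(2\alpha)}\,d\mu_k(w).$$
The measure $v_0(t^{1/(2\alpha)}w)\,t^{d_k/(2\alpha)}\,d\mu_k(w)$ has total variation $\|v_0\|_{L^1}$ and concentrates at $0$. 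By Minkowski's inequality, it is enough to show
$$\sup_{|w|\le \delta}\|\tau_{-w}h_{1,\alpha}-h_{1,\alpha}\|_{L^p(d\mu_k)}\to 0 \quad\text{as } \delta\to 0.$$

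\emph{Step 3: continuity of translation, the main obstacle.} Since $\tau_{-w}$ is not an isometry in general, the last claim cannot be obtained from an abstract continuity-of-translation argument. We establish it directly for the radial profile $\tilde h_{1,\alpha}$, which is continuous and decays like $(1+|z|)^{-d_k-2\alpha}$ for $\alpha<1$, or is Gaussian when $\alpha=1$.
\begin{itemize}
\item \emph{Pointwise convergence.} By R\"osler's formula, $|\tau_{-w}h_{1,\alpha}(z)-h_{1,\alpha}(z)|$ is at most the oscillation of $\tilde h$ on $\bigl[|z|-|w|,\,|z|+|w|\bigr]$. This tends to $0$ uniformly in $z$ by uniform continuity of $\tilde h$.
\item \emph{Uniform domination.} For $|w|\le 1$ and $|z|\ge 2$, the same formula gives the bound $C(1+|z|)^{-d_k-2\alpha}$, which lies in $L^p(d\mu_k)$ for every $p\ge 1$.
\end{itemize}
Dominated convergence then yields the claim for all $1\le p\le\infty$. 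For $p=\infty$, uniform continuity alone suffices.

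The principal difficulty is the decay estimate for $h_{1,\alpha}$ in the case $\alpha<1$. We obtain it from subordination,
$$h_{1,\alpha}=\int_0^\infty h_s\,\eta_\alpha(s)\,ds,$$
together with the Gaussian form of the Dunkl heat kernel's radial profile, $\tilde h_s(r)=c\,s^{-d_k/2}e^{-r^2/4s}$.
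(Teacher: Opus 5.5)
Your proof is correct, and it takes a genuinely different route from the paper's.

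\textbf{The paper's route.} It works in two stages.
\begin{itemize}
\item For $\alpha=1$ it proves the first-moment bound $\|h_t\ast_k f-Mh_t\|_{L^1(d\mu_k)}\lesssim N_1(f)\,t^{-1/2}$ (and its $L^\infty$ analogue). This uses the Gaussian and Lipschitz-type estimates of Theorem~\ref{Dunkl heat ker esti}. It then runs V\'azquez's density argument.
\item For $0<\alpha<1$ it never looks at $h_{1,\alpha}$ pointwise. It moves the $L^p$ norm inside the subordination integral by Minkowski and splits at $s=t$. For $s\ge t$ it uses the already-proved case $\alpha=1$; on $(0,t)$ it uses the exponential smallness of $\eta_{t,\alpha}$.
\end{itemize}

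\textbf{Your route.} You handle all $0<\alpha\le 1$ at once. The exact scaling of $h_{t,\alpha}$ and of Dunkl translations converts the weighted error for compactly supported data into an average of $\tau^k_{-w}h_{1,\alpha}-h_{1,\alpha}$ against a measure of fixed mass concentrating at $w=0$. Everything then reduces to $L^p$-continuity at $w=0$ of the translates of one fixed radial function. You get this from R\"osler's positive radial product formula, which is also the tool underlying the estimates the paper quotes.

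\textbf{What each buys.} The paper's route gives a quantitative rate for $\alpha=1$ and data with finite first moment, and it needs no pointwise information on $h_{1,\alpha}$. Yours needs neither the Lipschitz estimate nor, after the simplification below, anything about $\eta_{t,\alpha}$ beyond positivity and unit mass. By contrast, the paper's bound for $J_t$ in fact uses finiteness of the negative moment $\int_0^\infty s^{-d_k/(2p')}\eta_{1,\alpha}(s)\,ds$. Your argument is a single uniform one, but it gives no rate.

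\textbf{Points to tidy up.}
\begin{itemize}
\item As written, your product formula pairs $\langle x,\eta\rangle$ with $\mu_x$. The correct form is $\tau^k_x f(-y)=\int\tilde f\bigl(\sqrt{|x|^2+|y|^2-2\langle y,\eta\rangle}\bigr)\,d\mu_x(\eta)$ with $\mathrm{supp}\,\mu_x\subset\mathrm{conv}(G\cdot x)$. It is $|\eta|\le|x|$ that confines the argument to $\bigl[\,\bigl||x|-|y|\bigr|,\,|x|+|y|\,\bigr]$.
\item The formula is proved for nice radial functions. You should note that it passes to $h_{1,\alpha}$ from the Gaussian case by subordination and Tonelli.
\item The decay estimate you call the principal difficulty is unnecessary. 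Since $\tilde h_{1,\alpha}=\int_0^\infty\tilde h_s\,\eta_{1,\alpha}(s)\,ds$ is a positive mixture of decreasing Gaussian profiles, it is nonincreasing. Hence for $|w|\le 1$ you get $|\tau^k_{-w}h_{1,\alpha}(z)-h_{1,\alpha}(z)|\le 2\tilde h_{1,\alpha}\bigl(\max(|z|-1,0)\bigr)$. This majorant lies in every $L^p(d\mu_k)$ because $h_{1,\alpha}\in L^1\cap L^\infty(d\mu_k)$.
\item If you do want the rate $(1+|z|)^{-d_k-2\alpha}$, you need the tail bound $\eta_{1,\alpha}(s)\lesssim s^{-1-\alpha}$. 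This is standard, but it is not among the properties of $\eta_{t,\alpha}$ listed in the paper.
\end{itemize}
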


\par Our proof is grounded in an analysis of the error via a representation formula analogous to the classical case; nevertheless, several significant differences emerge. For instance, the classical fractional heat kernel possesses a well-known self-similar formula. By utilizing this formula, along with certain established pointwise estimates \cite{PruittTPKAHPFTGSP} of the fractional heat kernel and its derivatives, key results were obtained in \cite{VazquezABFTFHEITES}. Additionally, the difference between fractional heat kernels was estimated through the mean value theorem, where the translation invariance of the Lebesgue measure played a pivotal role. In our case, while a self-similar formula can be derived, no comparable pointwise estimates are available for the fractional Dunkl heat kernel. Moreover, the difference between fractional heat kernels arises as the difference of Dunkl translations of fractional heat kernels. Consequently, these tools cannot be applied to our scenario. The key element in this article lies in selecting an effective method for the proof that is appropriately tailored to the Dunkl context. Here, we first rely on the known estimates of the Dunkl heat kernel and its Dunkl translations to establish the main theorem for the heat equation, that is, for the case $\alpha=1$. Subsequently, by applying Bochner's subordination formula and carefully employing certain properties of the subordinate function and Dunkl convolution, we provide the complete proof.
\par Theorem \ref{main thm linear part} suggests several further directions for study. For instance, the Caffarelli--Silvestre extension problem associated with fractional Dunkl Laplacian was investigated in \cite{AnoopHIATHIFDG}. Our result therefore motivates the study of the large-time behavior of such extension problems in the Dunkl setting, in analogy with the corresponding developments on noncompact symmetric spaces and Riemannian manifolds \cite{PapageorgiouABOSTTEPFTFLONSS, PapageorgiouLTBOTFOORTTFLCRM}.

\par In this article, we also present one application. Specifically, as an application of Theorem \ref{main thm linear part}, we examine the large-time behavior of the following nonlinear fractional Dunkl heat equation.
\begin{align}\label{non-linear frac Dunkl heat eq}
    \begin{cases}
       \partial_t u(x,t) + (-\Delta_k)^{\alpha} \,u(x,t) &= - u^p(x,t),\, \quad (x,t)\in \mathbb{R}^d \times (0,\infty),\\
       \hfill u(x,0) &= u_0(x).
    \end{cases}
    \end{align}
Using Theorem \ref{main thm linear part} and following the method of Fino and Karch \cite{FinoDOMFNEWFL}, we obtain the following asymptotic behavior for the solutions of the above equation.
\begin{thm}\label{main thm non-linear part}
Let $p> 1+{2\alpha}/{d_k}$, $0\leq u(x,t)$ be a non-zero solution of \eqref{non-linear frac Dunkl heat eq} and $u_0 \in L^1(d\mu_k)$. Then
$$\lim\limits_{t\to \infty}\int_{\mathbb{R}^d}u(x, t)\,d\mu_k(x) =\lim\limits_{t\to \infty} M(t)=M_\infty >0.$$
Also,
\begin{equation*}\label{non-linear lp assymp behav}
\lim\limits_{t\to \infty} t^{d_k/(2\,\alpha\,q')} \|u(\cdot, t) - M_\infty h_{t,\alpha}\|_{L^q(d\mu_k)}  =0, \text{ for } 1\leq q <\infty.
\end{equation*}
\end{thm}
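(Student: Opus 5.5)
We follow Fino and Karch \cite{FinoDOMFNEWFL} and reduce everything to Theorem \ref{main thm linear part}. We will use that the solution has the Duhamel representation
\[
u(t)=h_{t,\alpha}\ast_k u_0-\int_0^t h_{t-s,\alpha}\ast_k u^p(s)\,ds .
\]
We also use the standard facts on the fractional Dunkl heat semigroup: it is positivity preserving and mass preserving, $\int h_{t,\alpha}\,d\mu_k=1$, and it satisfies $\|h_{t,\alpha}\ast_k f\|_{L^q(d\mu_k)}\le C t^{-d_k/(2\alpha q')}\|f\|_{L^1(d\mu_k)}$. Since $u\ge 0$, comparison gives $0\le u(t)\le h_{t,\alpha}\ast_k u_0$, so $u_0\ge0$. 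Integrating the equation in $x$ gives
\[
M(t)=\int u_0\,d\mu_k-\int_0^t\!\!\int u^p\,d\mu_k\,ds .
\]
Hence $M(t)$ is nonincreasing and nonnegative, and the limit $M_\infty\ge0$ exists.

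\textbf{Step 1: $M_\infty>0$.} This is the main obstacle. The decay bound gives $\|u(s)\|_{L^\infty}\le Cs^{-d_k/(2\alpha)}\|u_0\|_{L^1}$, so
\[
\int u^p(s)\,d\mu_k\le \|u(s)\|_\infty^{p-1}M(s)\le C s^{-d_k(p-1)/(2\alpha)}M(s).
\]
The exponent $d_k(p-1)/(2\alpha)$ exceeds $1$ precisely because $p>1+2\alpha/d_k$, so this bound is integrable at infinity. Near $s=0$ we instead use $\int u^p\le \|u\|_\infty^{p-1}M(s)$ on a short interval. Let $s_0$ be any time with $M(s_0)>0$, which exists because $u\not\equiv0$. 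On $[s_0,\infty)$, Gronwall applied to $M'(s)\ge -Cs^{-\gamma}M(s)$ with $\gamma>1$ gives
\[
M(t)\ge M(s_0)\exp\Big(-C\int_{s_0}^\infty s^{-\gamma}ds\Big)>0 .
\]
Note that the constant $C$ depends on $\|u(s_0)\|_{L^1}$ and uses the time-shifted semigroup, which is legitimate since $u(\cdot+s_0)$ solves the same problem.

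\textbf{Step 2: $L^1$ convergence.} Write
\[
u(t)-M_\infty h_{t,\alpha}=\big(h_{t-\tau,\alpha}\ast_k u(\tau)-M(\tau)h_{t-\tau,\alpha}\big)-\int_\tau^t h_{t-s,\alpha}\ast_k u^p(s)\,ds+\big(M(\tau)h_{t-\tau,\alpha}-M_\infty h_{t,\alpha}\big).
\]
\begin{itemize}
\item For fixed $\tau$, the first term tends to $0$ in $L^1$ by Theorem \ref{main thm linear part} with $p=1$, applied to the datum $u(\tau)$.
\item The second term has $L^1$ norm at most $M(\tau)-M(t)$, which is small for $\tau$ large.
\item The third term is bounded by $|M(\tau)-M_\infty|$ plus $M_\infty\|h_{t-\tau,\alpha}-h_{t,\alpha}\|_{L^1}$.
\end{itemize}
For the last piece we write
\[
h_{t-\tau,\alpha}-h_{t,\alpha}=h_{t-\tau,\alpha}-h_{t-\tau,\alpha}\ast_k h_{\tau,\alpha}.
\]
Since $\int h_{\tau,\alpha}\,d\mu_k=1$, this is $-\big(h_{t-\tau,\alpha}\ast_k h_{\tau,\alpha}-1\cdot h_{t-\tau,\alpha}\big)$. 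It therefore again tends to $0$ by Theorem \ref{main thm linear part}, now with datum $h_{\tau,\alpha}$ and time $t-\tau\to\infty$. Choosing $\tau$ large first and then letting $t\to\infty$ gives $\|u(t)-M_\infty h_{t,\alpha}\|_{L^1}\to0$.

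\textbf{Step 3: $L^q$ convergence for $1<q<\infty$.} Use Hölder interpolation between $L^1$ and $L^\infty$:
\[
\|f\|_{L^q}\le \|f\|_{L^1}^{1/q}\|f\|_{L^\infty}^{1/q'} .
\]
Take $f=u(t)-M_\infty h_{t,\alpha}$. We have $\|f\|_\infty\le Ct^{-d_k/(2\alpha)}$, since $0\le u\le h_t\ast_k u_0$ and $h_{t,\alpha}$ has the same sup-norm decay. Hence
\[
t^{d_k/(2\alpha q')}\|f\|_{L^q}\le C\|f\|_{L^1}^{1/q}\to0 .
\]
This interpolation breaks down at $q=\infty$, which explains why $q<\infty$ is imposed in the statement.
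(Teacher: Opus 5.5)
Your proof is correct. Steps 2 and 3 follow the paper's route.

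\textbf{Steps 2 and 3.} You use the same restart-at-time-$\tau$ decomposition (the paper's $t_0$). You bound the Duhamel piece by the mass lost on $[\tau,t]$, as the paper does. You interpolate with H\"older between $L^1$ and $L^\infty$, which is the case $r=\infty$ of the paper's $L^1$--$L^r$ interpolation. You also improve on the paper at one point. The paper lets $\|h_{t-t_0,\alpha}-h_{t,\alpha}\|_{L^1(d\mu_k)}$ vanish in the limit without comment. Your identity $h_{t,\alpha}=h_{t-\tau,\alpha}\ast_k h_{\tau,\alpha}$, combined with Theorem~\ref{main thm linear part} for the mass-one datum $h_{\tau,\alpha}$, actually justifies this.

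\textbf{Step 1 is genuinely different.} The paper proves $M_\infty>0$ by a small-data comparison:
\begin{itemize}
\item it solves the problem with data $\epsilon u_0$ and asserts $u^\epsilon\le u$;
\item it shows the total mass lost by $u^\epsilon$ is $\lesssim\epsilon^{p}$, using $\|u(t)\|_{L^p}\lesssim\min\{t^{-d_k/(2\alpha p')}\|u_0\|_{L^1},\|u_0\|_{L^p}\}$;
\item hence $M_\infty\ge M^\epsilon_\infty>0$ for small $\epsilon$.
\end{itemize}
Your Gronwall argument on $M$ needs only three facts: the domination $u\le h_{s,\alpha}\ast_k u_0$, the $L^1\to L^\infty$ decay, and $\int u^p\,d\mu_k\le\|u\|_\infty^{p-1}M$. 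This buys you three things over the paper's route.
\begin{itemize}
\item You need no comparison principle for the \emph{nonlinear} problem. The paper asserts one from positivity of the linear kernel alone.
\item You need no integrability of $u_0$ beyond $L^1$. The paper's integral over $[0,1]$ uses $\|u_0\|_{L^p}$, which does not follow from $u_0\in L^1(d\mu_k)$.
\item You get an explicit lower bound $M_\infty\ge M(s_0)\exp\bigl(-Cs_0^{1-\gamma}/(\gamma-1)\bigr)$.
\end{itemize}

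\textbf{Small corrections to Step 1.} Take $s_0>0$ strictly, which is possible since $u\ge 0$ is not identically zero. Drop the remark about using the time-shifted semigroup with a constant depending on $\|u(s_0)\|_{L^1}$. Taken literally, it produces the weight $(s-s_0)^{-\gamma}$, which is not integrable at $s_0$ when $\gamma>1$. The unshifted bound, with constant depending on $\|u_0\|_{L^1}$, is what makes the argument work. The sentence about ``near $s=0$'' is unnecessary, and for $u_0\in L^1$ only, $\|u(s)\|_\infty$ need not even be finite there.
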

\begin{rem}
Due to the limited understanding of fractional Dunkl derivatives and the lack of lower bounds for Dunkl translations, we are unable to address the case $p\leq 1+{2\alpha}/{d_k}$ here. Investigating this direction would be an interesting problem.
\end{rem}
\par We conclude this section with an outline of the structure of the paper. In Section \ref{Sec-prel}, we present the preliminary results and notations required for reading the paper. In Section \ref{Sec-frac Dunkl heat ker}, we state some known results regarding the Dunkl heat kernel and Dunkl heat semigroup. We also introduce the fractional Dunkl heat semigroup in this section and prove some properties of the fractional Dunkl heat kernel. In Section \ref{Sec- linear Main Thm proof}, we prove Theorem \ref{main thm linear part}, and in Section \ref{Sec- Nonlinear Main Thm proof}, we prove Theorem \ref{main thm non-linear part}.

\section{Preliminaries and notations
}\label{Sec-prel}
Dunkl theory is now familiar to experts in the field. However, to avoid confusion for readers unfamiliar with Dunkl theory, we will outline the necessary preliminaries and notation for this article.
\subsection{The Dunkl setup}
\par Let $\langle \cdot, \cdot \rangle$ denote the usual dot product in $\mathbb{R}^d$, and let $|\cdot| = \sqrt{\langle \cdot, \cdot \rangle}$ denote the usual norm. For any $\lambda \in \mathbb{R}^d$ with $|\lambda|=\sqrt{2}$, we define $\sigma_\lambda: \mathbb{R}^d \to \mathbb{R}^d$ by 
$$\sigma_\lambda (x) = x - \langle x, \lambda \rangle \lambda.$$
The map $\sigma_\lambda$ is known as the \emph{reflection} associated with the vector $\lambda$.
In this article, we denote by $R$ a \emph{normalized root system}, that is, $R \subseteq \mathbb{R}^d \setminus \{0\}$ with the properties
\begin{enumerate}[label=(\roman*)]
    \item $R$ is finite.
    \item $R \cap \mathbb{R}\lambda=\{\lambda, -\lambda\}$, for any $\lambda \in R$.
    \item $\sigma_\lambda(R)=R,$ for any $\lambda \in R$.
    \item $|\lambda|^2=2$, for any $\lambda \in R$.
\end{enumerate}
The reflections $\{\sigma_\lambda : \lambda \in R\}$ generate a finite group $G \subseteq O(d, \mathbb{R})$, which is known as the \emph{reflection group} associated with the root system $R$.
\par Due to the action of the reflection group $G$, a new `metric' on $\mathbb{R}^d$ can be introduced, defined by $d_G(x, y) = \min |\sigma(x) - y|$, for any $x, y \in \mathbb{R}^d$. It is obvious that $d_G(x, y) = 0$ does not necessarily imply $x = y$. However, it satisfies the other properties of a metric, namely non-negativity, symmetry, and the triangle inequality.

\par We define a function $k : R\to [0, \infty)$ such that $k$ is $G$-invariant, that is, $k(\sigma(\lambda))= k(\lambda)$, for any $\sigma \in G$ and for any $\lambda \in R$. The function $k$ is known as the \emph{multiplicity function}. 
\par  Let $v_k$ be the $G$-invariant weight function given by $v_k(x) = \prod _ {\lambda \in R} | \langle x , \lambda \rangle|^{k (\lambda)}$. Clearly, $v_k$ is a homogeneous function of degree $\gamma_k := \sum_{\lambda \in R} k(\lambda)$. Let $d\mu_k(x)$ be the associated measure, given by $c_k \, v_k(x) \, dx$, where
$$c_k^{-1} = \int_{\mathbb{R}^d} e^{-\frac{|x|^2}{2}} \, v_k(x) \, dx.$$
It is also clear from the homogeneities of the function $v_k$ and the Lebesgue measure, that $d\mu_k$ is homogeneous of degree $d_k := d + \gamma_k$. One of the main difficulties that occur when working with this measure is that the volume of a ball not only depends on the radius but also on the center. To be more precise,
\begin{equation*}\label{Volofball}
    \mu_k(B(x,r))\approx \ r^d\prod\limits_{\lambda \in R}\left(|\langle x,\lambda \rangle|+r\right)^{k(\lambda)}.
\end{equation*}

\subsection{Dunkl operators, Dunkl kernel and Dunkl transform}
In 1989, Charles F. Dunkl \cite{Dunkl} introduced the \emph{Dunkl operators} $\{T_j: 1\leq j\leq d\}$, given by 
\begin{eqnarray*}
T_j f(x)= \partial_j f(x)+\sum\limits_{\lambda \in R} \frac{k(\lambda )}{2} \lambda _j \frac{f(x) - f(\sigma _\lambda  x)}{\langle \lambda  , x \rangle}, \quad 1\leq j \leq d;
\end{eqnarray*}
 where $\lambda =(\lambda_1,\lambda_2,...,\lambda_d)$ and $f\in C^1(\mathbb{R}^d)$.
These operators can be considered a generalization of the partial derivative operators and coincide with them when the multiplicity function $k$ is zero.
\par Given any $y \in \mathbb{R}^d$, it is known (see \cite{DunklIKWRGI}) that there exists a unique real-analytic solution $f(x) = E_k(x, y)$ to the system of equations.
 \begin{align*}
    \begin{cases}
       T_j f  &=  y_j f, \hspace{.5cm}1\leq j\leq d;\\
       \hfill f(0) &= 1.
    \end{cases}
\end{align*}
We call $E_k$ the \emph{Dunkl kernel}, and it serves the role of the exponential function in this setup. In fact, it shares several properties with the exponential function, which are as follows
\begin{enumerate}[label=(\roman*)]
\item $E_k$ extends to a unique holomorphic function on $\mathbb{C}^d \times \mathbb{C}^d$.
    \item $E_k(x,y) = E_k(y,x)$ for any $x, y \in \mathbb{C}^d$.
    \item $E_k(tx,y) = E_k(x,ty)$ for any $x, y \in \mathbb{C}^d$ and for any $t \in \mathbb{C}$. 
    \item $|E_k(ix,y)| \leq 1,$ for any $x, y\in \mathbb{R}^d$.
\end{enumerate}
These results are well known in the literature, and their proofs can be found in \cite{DunklIKWRGI, RoslerDOTA}.

\par For $1 \leq p < \infty$, let $L^p(d\mu_k)$ denote the Lebesgue spaces over $\mathbb{R}^d$ with respect to the measure $d\mu_k$, and let $L^\infty(d\mu_k)$ denote the usual $L^\infty$ space over $\mathbb{R}^d$.
Similar to the Fourier transform, the \emph{Dunkl transform} of a function $f$ is given by:
\begin{eqnarray*}
\mathcal{F}_kf(\xi)=c_k\,\int_{\mathbb{R}^d}f(x)E_k(-i\xi,x)\,h_k(x)\,dx =\int_{\mathbb{R}^d}f(x)E_k(-i\xi,x)\,d\mu_k(x).
\end{eqnarray*}
From the properties of the Dunkl kernel, the above definition makes sense for any function $f \in L^1(d\mu_k)$. Moreover, in the case $k \equiv 0$, $\mathcal{F}_k$ reduces to the classical Fourier transform, and thus the Dunkl transform serves as a generalization of the Fourier transform. We now list a few useful properties of $\mathcal{F}_k$ that are similar to those of the Fourier transform
(for details we refer the reader \cite{deJeuTDT, RoslerDOTA, XuBook}).
\begin{enumerate}[label=(\roman*)]
    \item The inversion formula $$f(x)=\mathcal{F}_k^{-1}(\mathcal{F}_kf)(x)=\int_{\mathbb{R}^d}\mathcal{F}_kf(\xi)E_k(i\xi,x)\,d\mu_k(\xi),$$ holds for any function $f$ such that $f, \, \mathcal{F}_kf \in L^1(d\mu_k)$.
    \item The operator $\mathcal{F}_k$ is an isometry on $L^2(d\mu_k)$, that is, $||\mathcal{F}_kf||_{L^2(d\mu_k)}=||f||_{L^2(d\mu_k)}$.
    \item  $\mathcal{F}_k (f_t)(\xi)= \mathcal{F}_k\,f (t\xi)$,  where  $f_t(x) := t^{-d_k} f(x/t), t>0$.
    \item If $f$ is radial, then $\mathcal{F}_k f$ is also radial.
\end{enumerate}

\subsection{Dunkl translation and Dunkl convolution}
As the associated measure is not invariant under usual translation, a new operator, called the \emph{Dunkl translation}, was introduced to support the development of harmonic analysis in this framework. Formally, the Dunkl translation $\tau^k_x: L^2(d\mu_k) \to L^2(d\mu_k)$ is an bounded operator given by
$$\mathcal{F}_k(\tau^k_x f)(y) = E_k(ix,y) \mathcal{F}_k f(y).$$
Here, we mention a few well-established properties of the Dunkl translation operator, which can be found in \cite{ThangaveluCOMF}, for later use.

\begin{enumerate}[label=(\roman*)]
\item $\tau^k_0=I$, the identity operator.
\item \label{pointwise formula for Dunkl trans} 
The pointwise formula 
$$
\tau^k_yf(x)=\tau^k_x f(y) = \int_{\mathbb{R}^d} E_k(ix, \xi) E_k(iy, \xi) \mathcal{F}_k f(\xi) \, d\mu_k(\xi)
$$ 
holds any function $f\in L^2(d\mu_k)$ such that $f, \, \mathcal{F}_kf \in L^1(d\mu_k)$.

\item \label{positivity of Dunkl trans} 
If $f$ is a reasonable radial function such that $f \geq 0$ a.e., then 
$\tau^k_x f \geq 0$ a.e..
\item \label{bddness of Dunkl trans on radial Lp}
 $\tau^k_x : L^p(d\mu_k) \to L^p(d\mu_k)$ is a bounded operator, $1 \leq p \leq \infty$, when restricted to radial functions(see \cite{GorbachevPLBDTGTOAIA, ThangaveluCOMF}). In general, the boundedness of $\tau^k_x$ is not known for all the $L^p$-functions.
\end{enumerate}
\par For $f,g \in L^2(d\mu_k)$, the \emph{Dunkl convolution} of $f$ and $g$ is defined by 
	$$f\ast_kg(x)=\int_{\mathbb{R}^d}f(y)\tau^k_xg(-y)\,d\mu_k(y).$$
	It is easy to see that the Dunkl convolution possesses the following properties
		\begin{enumerate}[label=(\roman*)]
			\item $f\ast_kg(x)=g\ast_kf(x)$ for any $f, g \in  L^2( d\mu_k)$.
			
			\item\label{dunkl conv is commutative} $\mathcal{F}_k(f\ast_kg)(\xi)=\mathcal{F}_kf(\xi) \mathcal{F}_kg(\xi)$ for any $f, g \in  L^2(d\mu_k)$.
		\end{enumerate}
Due to the lack of boundedness of Dunkl translations in $L^p$ spaces, Young's inequality for Dunkl convolution is not available. However, if one of the functions is taken to be radial, then this inequality holds. Since we will be using it in the article many times, we state it formally below.
\begin{thm}\label{Youngs ineq} \cite[Theorem 3.6]{GorbachevPLBDTGTOAIA}
Let $1\leq p, q, r\leq \infty$ with
$1/p + 1/q=1+1/r$. Also, let $f \in L^p(d\mu_k)$ and $g \in L^q(d\mu_k)$ be such that one of them is radial.
 Then
$$\|f\ast_kg\|_{ L^r(d\mu_k)} \leq  \|f\|_{L^p(d\mu_k)}  \|g\|_{L^q(d\mu_k)}.$$
\end{thm}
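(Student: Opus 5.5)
We may assume $g$ is the radial function, since the case of radial $f$ follows by exchanging the roles of $p$ and $q$ and using the commutativity $f\ast_k g=g\ast_k f$. Write $K(x,y):=\tau^k_x g(-y)$, so that $f\ast_k g(x)=\int f(y)K(x,y)\,d\mu_k(y)$. The plan is to reduce to the classical proof of Young's inequality for integral operators. That proof uses only two mixed-norm bounds on the kernel:
$$\sup_{y}\|K(\cdot,y)\|_{L^q(d\mu_k)}\leq \|g\|_{L^q(d\mu_k)},\qquad \sup_{x}\|K(x,\cdot)\|_{L^q(d\mu_k)}\leq \|g\|_{L^q(d\mu_k)}.$$
Given these, the standard Hölder splitting goes through verbatim with $d\mu_k$ in place of Lebesgue measure. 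One writes
$$|f(y)K(x,y)|=\big(|f(y)|^p|K(x,y)|^q\big)^{1/r}\,|f(y)|^{1-p/r}\,|K(x,y)|^{1-q/r}$$
and applies the three-factor Hölder inequality with exponents $r$, $pr/(r-p)$ and $qr/(r-q)$. The third factor is controlled by the $y$-integral bound, and the middle factor by $\|f\|_p$. Taking the $L^r$ norm in $x$ and using Fubini together with the $x$-integral bound then gives $\|f\ast_k g\|_r\le \|f\|_p\|g\|_q$. The endpoint cases $r=\infty$ or $p=1$ are handled directly by Hölder or Minkowski.

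The key input is therefore the contractivity $\|\tau^k_x g\|_{L^q(d\mu_k)}\le\|g\|_{L^q(d\mu_k)}$ for radial $g$, uniformly in $x$ and with constant exactly $1$. For this I would use Rösler's representation of Dunkl translations of radial functions. If $g(z)=g_0(|z|)$, then
$$\tau^k_x g(y)=\int_{\mathbb{R}^d} g_0\big(A(x,y,\eta)\big)\,d\mu^k_x(\eta),\qquad A(x,y,\eta)=\sqrt{|x|^2+|y|^2-2\langle y,\eta\rangle},$$
where $\mu^k_x$ is a probability measure supported in the convex hull of the orbit $G\cdot x$. This immediately gives the positivity property \ref{positivity of Dunkl trans}. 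It also gives, by Jensen's inequality applied to $s\mapsto|s|^q$, the pointwise bound $|\tau^k_x g(y)|^q\le \tau^k_x(|g|^q)(y)$.

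Next I use that $\int \tau^k_x h\,d\mu_k=\int h\,d\mu_k$ for radial $h\ge 0$ in $L^1$. This follows from $\mathcal{F}_k(\tau^k_x h)(0)=E_k(ix,0)\mathcal{F}_kh(0)=\mathcal{F}_k h(0)$, first for nice radial $h$ and then by monotone approximation. Integrating the pointwise Jensen bound in $y$ then gives $\|\tau^k_x g\|_q^q\le\||g|^q\|_1=\|g\|_q^q$; for $q=\infty$ the bound is immediate from the probability-measure representation.

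Finally, the symmetry $\tau^k_x g(-y)=\tau^k_{-y}g(x)$, which is the pointwise formula \ref{pointwise formula for Dunkl trans} combined with $E_k(-iy,\xi)=E_k(iy,-\xi)$ and radiality of $\mathcal{F}_k g$, turns the bound in $y$ into the bound in $x$. Thus both mixed-norm estimates on $K$ hold with constant $\|g\|_q$. A density argument, using radial Schwartz $g$ and Schwartz $f$ and then passing to limits, justifies all the interchanges.

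The main obstacle is this contractivity step. It rests entirely on Rösler's positive integral representation for radial functions, which is not proved in the paper and which I would quote. Without radiality no such representation or $L^p$ bound is known, which is exactly why the theorem requires one of $f,g$ to be radial.
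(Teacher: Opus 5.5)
Your proof is correct. The paper gives no proof of this statement; it quotes it from \cite[Theorem 3.6]{GorbachevPLBDTGTOAIA}. Your argument supplies the omitted proof along the standard lines. R\"osler's positive radial product formula gives $|\tau^k_x g|^q\le \tau^k_x(|g|^q)$, and $\int \tau^k_x h\,d\mu_k=\int h\,d\mu_k$ for radial $h\ge 0$. Together these give $\|\tau^k_x g\|_{L^q(d\mu_k)}\le \|g\|_{L^q(d\mu_k)}$ uniformly in $x$. The two resulting kernel bounds then feed into the Schur--Young test for integral operators with constant exactly $1$.

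A few points should be tightened; none is a gap.

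\begin{enumerate}
\item \textbf{Sign in R\"osler's formula.} With the paper's convention $\mathcal{F}_k(\tau^k_x f)=E_k(ix,\cdot)\mathcal{F}_k f$, the formula reads $\tau^k_x g(y)=\int g_0\big(\sqrt{|x|^2+|y|^2+2\langle y,\eta\rangle}\big)\,d\mu^k_x(\eta)$; check $k=0$, where $\mu^k_x=\delta_x$. Your minus-sign version is the formula for $\tau^k_x g(-y)$. This is harmless, since you only use the probability-average structure and $\mu_k$ is even.

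\item \textbf{Symmetry.} The identity $\tau^k_x g(-y)=\tau^k_{-y}g(x)$ is just the symmetric pointwise formula with $y$ replaced by $-y$. Radiality of $\mathcal{F}_k g$ plays no role there.

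\item \textbf{Integral identity for general $h$.} Passing from Schwartz $h$ to arbitrary nonnegative radial measurable $h$ is cleaner by uniqueness of measures than by monotone approximation. The set functions $B\mapsto\int\tau^k_x(\chi_B(|\cdot|))\,d\mu_k$ (with $\tau^k_x$ given by R\"osler's formula) and $B\mapsto\mu_k(\{z:|z|\in B\})$ are locally finite Borel measures on $[0,\infty)$. They agree on every $C_c$ function, hence coincide. This also shows that $\tau^k_x g$ is a.e. independent of the representative of $g$, which you need for the $q=\infty$ bound.

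\item \textbf{Infinite endpoints.} The paper defines $\ast_k$ only on $L^2(d\mu_k)$. For $p=\infty$ or $q=\infty$, where norm density fails, you must define $f\ast_k g$ by translating the radial factor through R\"osler's formula rather than by a limiting argument. For finite exponents your density argument is fine.
\end{enumerate}
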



\section{Dunkl heat kernel and Fractional Dunkl heat kernel}\label{Sec-frac Dunkl heat ker} 
In this section, we begin by discussing several properties of the Dunkl heat kernel and the Dunkl heat semigroup, as developed by R\"osler \cite{RoslerGHPATHEFDO}.

\par Recall that $\Delta_k=\sum_{j=1}^d T_j^2$ denotes the \emph{Dunkl Laplacian}. Now the \emph{Dunkl heat semigroup} $\{e^{t\,\Delta_k}\}_{t\geq 0}$ is given by
$$e^{t\,\Delta_k}f(x)= \mathcal{F}_k^{-1}(e^{-t|\cdot|^2} \mathcal{F}_kf)(x).$$
In fact, $e^{t\,\Delta_k} f$ is a solution of the Dunkl heat equation
 \begin{align}\label{Dunkl heat eq}
    \begin{cases}
       \partial_t u(x,t) -\Delta_k \,u(x,t) &= 0,\, \quad (x,t)\in \mathbb{R}^d \times (0,\infty),\\
       \hfill u(x,0) &= f(x).
    \end{cases}
    \end{align}
By applying the Dunkl transform, it can be written in terms of Dunkl convolution as
$$e^{t\,\Delta_k}f(x)= f \ast_k h_t(x)= \int_{\mathbb{R}^d}\tau^k_xh_t(-y)f(y) \, d\mu_k(y),$$
where $\tau^k_xh_t(-y)$ is known as the \emph{Dunkl heat kernel} and is given by
$$h_t(x)=\mathcal{F}_k^{-1}(e^{-t|\cdot|^2})(x)= (2t)^{-d_k/2}e^{-|x|^2/(4t)}.$$
Thus, $h_t$ is a non-negative Schwartz function, and it also has the nice property that for any $t>0$,
\begin{equation}\label{int of heat ker is 1}
  \int_{\mathbb{R}^d}h_t(x)\, d\mu_k(x)=1 .
\end{equation}
The following estimates of the Dunkl heat kernel obtained by Anker, Dziuba\'{n}ski and Hejna are very useful for us.
\begin{thm}\label{Dunkl heat ker esti} \cite[Theorem 4.1]{HejnaHFCHF}
 Let $t>0$  and $x, y\in \mathbb{R}^d$, then
 \begin{enumerate}[label=(\roman*)]
     \item \leavevmode\vspace*{-\dimexpr\abovedisplayskip + \baselineskip}$$|\tau^k_x h_t(-y)| \lesssim \frac{1}{\mu_k(B(y, \sqrt{t}))}\, e^{{-c\,d_G(x,\, y)^2}/{t}}.$$
     
     \item \leavevmode\vspace*{-\dimexpr\abovedisplayskip + \baselineskip} $$|\tau^k_x h_t(-y)-\tau^k_x h_t(-y')| \lesssim \frac{|y-y'|}{\sqrt{t}}\frac{1}{\mu_k(B(y, \sqrt{t}))}\, e^{{-c\,d_G(x,\, y)^2}/{t}},$$
     if $|y-y'|<\sqrt{t}$.
 \end{enumerate}
\end{thm}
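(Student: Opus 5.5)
Since this is \cite[Theorem 4.1]{HejnaHFCHF}, I would reproduce the strategy of Anker--Dziuba\'nski--Hejna. Write $h_t(x,y):=\tau^k_x h_t(-y)$, which is nonnegative and symmetric. Start from R\"osler's closed formula
$$h_t(x,y)=c_k^{-1}(2t)^{-d_k/2}e^{-(|x|^2+|y|^2)/(4t)}E_k\Big(\tfrac{x}{\sqrt{2t}},\tfrac{y}{\sqrt{2t}}\Big),$$
together with R\"osler's positive integral representation $E_k(x,y)=\int e^{\langle \xi,y\rangle}\,d\mu_x(\xi)$. Here $\mu_x$ is a probability measure supported in the convex hull of the orbit $G\cdot x$.

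\textbf{Gaussian factor.} For $\xi$ in that hull, $|x|^2+|y|^2-2\langle\xi,y\rangle$ is an affine function of $\xi$. Its minimum over the hull is therefore attained at a vertex $\sigma x$, where it equals $|\sigma x-y|^2\ge d_G(x,y)^2$. This gives the crude bound
$$h_t(x,y)\lesssim t^{-d_k/2}e^{-d_G(x,y)^2/(4t)}.$$
This already carries the correct Gaussian in $d_G$, but with the wrong prefactor: $t^{-d_k/2}$ is much larger than $\mu_k(B(y,\sqrt t))^{-1}$ when $y$ is far from the reflecting hyperplanes.

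\textbf{Volume prefactor (the main obstacle).} First I would prove the on-diagonal bound $h_t(y,y)\lesssim \mu_k(B(y,\sqrt t))^{-1}$. The route is through a regularity estimate of the form
$$|h_t(x,y)-h_t(x',y)|\lesssim \frac{|x-x'|}{\sqrt t}\,\sup h_t,$$
obtained by applying the gradient (or Dunkl operators $T_j$) to R\"osler's formula and using $\partial_x E_k(x,y)$ bounds via the representation above. From this, $h_t(x,y)\ge \tfrac12 h_t(y,y)$ for $|x-y|<\varepsilon\sqrt t$. Integrating over that ball and using $\int h_t(x,y)\,d\mu_k(x)=1$ from \eqref{int of heat ker is 1} then yields $\mu_k(B(y,\varepsilon\sqrt t))\,h_t(y,y)\lesssim 1$. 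Doubling of $\mu_k$ (from the volume formula) removes the $\varepsilon$.

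\textbf{Combining the two.} Next use the semigroup property $h_t(x,y)=\int h_{t/2}(x,z)h_{t/2}(z,y)\,d\mu_k(z)$ and Cauchy--Schwarz to get
$$h_t(x,y)\le h_t(x,x)^{1/2}h_t(y,y)^{1/2}.$$
Interpolating this geometrically with the crude Gaussian bound gives
$$h_t(x,y)\lesssim \big(\mu_k(B(x,\sqrt t))\,\mu_k(B(y,\sqrt t))\big)^{-1/4}\,t^{-d_k/4}\,e^{-d_G^2/(8t)}.$$
The remaining ratios $\mu_k(B(y,\sqrt t))/\mu_k(B(x,\sqrt t))$ and $\mu_k(B(y,\sqrt t))/t^{d_k}$ grow at most polynomially in $1+d_G(x,y)/\sqrt t$ (or $1+|y|/\sqrt t$). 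The delicate point is converting these into the single prefactor $\mu_k(B(y,\sqrt t))^{-1}$. I would handle it by splitting the $z$-integral in the semigroup identity according to whether $d_G(x,z)$ or $d_G(z,y)$ exceeds $d_G(x,y)/2$. In each piece one kernel factor is bounded pointwise by its on-diagonal volume bound, and the other is integrated using the $L^1$ normalization. The polynomial losses are absorbed by shrinking the constant $c$ in the exponential. This is the step I expect to be hardest.

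\textbf{Part (ii).} Write $h_t(x,y)-h_t(x,y')$ as an integral of $\nabla_y h_t(x,\cdot)$ along the segment from $y'$ to $y$. Then estimate the gradient by the same scheme applied to $\nabla_y$ of R\"osler's formula, using the semigroup identity $\nabla_y h_t=\int h_{t/2}(x,z)\nabla_y h_{t/2}(z,y)\,d\mu_k(z)$. The extra factor $1/\sqrt t$ comes from differentiating the Gaussian. For $|y-y'|<\sqrt t$, the balls $B(y,\sqrt t)$ and $B(y',\sqrt t)$ have comparable measure, and $d_G(x,\cdot)$ changes by at most $\sqrt t$ along the segment. Both changes are absorbed into the constants, giving (ii).
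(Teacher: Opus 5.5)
The paper gives no proof of this statement; it is quoted from \cite[Theorem 4.1]{HejnaHFCHF}. Your outline therefore has to stand on its own, and it has a genuine gap at exactly the step you flag as delicate.

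\textbf{The gap in the combining step.} Your interpolated bound carries the factor $t^{-d_k/4}$. Compared with the target, it loses $\big(\mu_k(B(y,\sqrt t))/t^{d_k/2}\big)^{1/2}\approx\prod_{\lambda\in R}(1+|\langle\lambda,y\rangle|/\sqrt t)^{k(\lambda)/2}$. This factor is polynomial in $|y|/\sqrt t$, not in $d_G(x,y)/\sqrt t$. For fixed $y$ off the reflecting hyperplanes it grows like $t^{-\gamma_k/4}$ as $t\to0$, while $d_G(x,y)/\sqrt t$ may stay bounded. So no shrinking of $c$ absorbs it, and the Cauchy--Schwarz bound by itself has no Gaussian decay.

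The proposed splitting of $\int h_{t/2}(x,z)h_{t/2}(z,y)\,d\mu_k(z)$ does not repair this. If one factor is bounded by an on-diagonal-type bound and the other is integrated using only $\int h_{t/2}=1$, no factor $e^{-c\,d_G(x,y)^2/t}$ appears at all. To produce it you would need one of two things:
\begin{itemize}
\item the off-diagonal bound with the correct prefactor, which is (i) itself; or
\item a tail estimate $\int_{d_G(x,z)>r}h_{t/2}(x,z)\,d\mu_k(z)\lesssim e^{-cr^2/t}$. Integrating your crude bound $t^{-d_k/2}e^{-d_G^2/4t}$ against $\mu_k$ does not give this; it reproduces the same unbounded factor $\mu_k(B(x,\sqrt t))/t^{d_k/2}$.
\end{itemize}
Moreover, an on-diagonal bound says nothing pointwise about $h_{t/2}(z,y)$ for $z\neq y$. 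Cauchy--Schwarz gives only $\big(\mu_k(B(z,\sqrt t))\,\mu_k(B(y,\sqrt t))\big)^{-1/2}$, which is far larger than $\mu_k(B(y,\sqrt t))^{-1}$ when, e.g., $z$ is near $0$ and $|y|\gg\sqrt t$.

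\textbf{How to repair it.} The missing idea is to keep the integral in your vertex-minimization step instead of discarding it. Set $A^2=|x|^2+|y|^2-2\langle\xi,y\rangle$, so $A^2\geq d_G(x,y)^2$ on the hull. Inside R\"osler's integral write $e^{-A^2/4t}\le e^{-d_G(x,y)^2/8t}e^{-A^2/8t}$. This gives $h_t(x,y)\le 2^{d_k/2}e^{-d_G(x,y)^2/8t}\,h_{2t}(x,y)$. Now apply Cauchy--Schwarz and the on-diagonal bound to $h_{2t}$; this gives $(\mu_k(B(x,\sqrt t))\mu_k(B(y,\sqrt t)))^{-1/2}$. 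Since $\mu_k(B(\sigma x,r))=\mu_k(B(x,r))$ for $\sigma\in G$, doubling yields $\mu_k(B(y,\sqrt t))/\mu_k(B(x,\sqrt t))\lesssim(1+d_G(x,y)/\sqrt t)^{d_k}$. This loss \emph{is} absorbable, so (i) follows with no splitting.

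\textbf{The on-diagonal step also needs a different justification.} Represent $h_t(x,y)$ through $\mu_y$ and use $|x-\xi|\le A$ for $\xi$ in the hull of $G\cdot y$; this gives $|\nabla_x h_t(x,y)|\lesssim t^{-1/2}h_{2t}(x,y)$. Concluding $h_t(x,y)\ge\frac12 h_t(y,y)$ near the diagonal would then need $\sup_{|z-y|<\varepsilon\sqrt t}h_{2t}(z,y)\lesssim h_t(y,y)$, which is essentially what you are trying to prove.

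Instead, note that $x\mapsto h_t(x,y)$ is, up to $c\,t^{-d_k/2}$, a Euclidean Gaussian convolution of the positive measure $e^{-(|y|^2-|\xi|^2)/4t}\,d\mu_y(\xi)$. Use $|x-\xi|^2\le 2|y-\xi|^2+2|x-y|^2$ and $|\xi|\le|y|$. This gives $h_t(x,y)\ge 2^{-d_k/2}e^{-|x-y|^2/2t}h_{t/2}(y,y)\ge 2^{-d_k/2}e^{-|x-y|^2/2t}h_t(y,y)$. The last inequality holds because $s\mapsto h_s(y,y)=\|h_{s/2}(y,\cdot)\|^2_{L^2(d\mu_k)}$ is nonincreasing. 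Integrating over $B(y,\sqrt t)$ then finishes the on-diagonal bound as you planned.

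\textbf{Part (ii).} Once (i) is in hand, your plan works directly from $|\nabla_y h_t(x,y)|\lesssim t^{-1/2}h_{2t}(x,y)$ along the segment. The semigroup identity for $\nabla_y h_t$ is not needed.
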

Next, we define the semigroup generated  by the fractional powers of $(-\Delta_k)$. For any $0<\alpha\leq 1$, let 
$$e^{-t\,(-\Delta_k)^{\alpha}}f(x):= \mathcal{F}_k^{-1}(e^{-t|\cdot|^{2\alpha}} \mathcal{F}_kf)(x).$$
When $\alpha=1$, the fractional Dunkl heat semigroup $\{e^{-t\,(-\Delta_k)^{\alpha}}\}_{t\geq 0}$ coincides with the Dunkl heat semigroup $\{e^{t\,\Delta_k}\}_{t\geq 0}$.
Since, by \cite{RoslerDOTA}, the Dunkl heat semigroup is a strongly continuous contraction semigroup on $L^p(d\mu_k)$, using Bochner’s subordination formula \cite{BochnerDEASP}, we can write
\begin{equation}\label{Bochner sub ord formula}
 e^{-t\,(-\Delta_k)^{\alpha}}f(x)= \int_0^\infty e^{s\,\Delta_k}f(x)\, \eta_{t,\alpha}(s) \, ds,   
\end{equation}
where $\eta_{t,\alpha}$ is a probability density function on $(0, \infty)$. According to \cite[p. 20]{GrigoryanHKAFTOMMS} (see also \cite{RejebSRRTTFDL}) and \cite[p. 89]{GraczykTDEFSPOSS}, $\eta_{t,\alpha}$ obeys the following properties
 \begin{enumerate}[label=(\roman*)]
      \item $\eta_{t,\alpha}(s) = t^{-1/\alpha}\,\eta_{1,\alpha}(s/t^{1/\alpha})$ for any  $t, s>0$.
      \item There exist $a_\alpha, b_\alpha \in \mathbb{R}$ and $c_\alpha>0$ such that
      $$\eta_{t,\alpha}(s) \lesssim t^{\,a_\alpha}(1+s^{b_\alpha})\,e^{-c_\alpha\, t^{\frac{1}{1-\alpha}}s^{-\frac{\alpha}{1-\alpha}}}$$ for any  $t>1, s>0$.
 \end{enumerate}
Now, similar to the case of the Dunkl heat semigroup, by applying the Dunkl transform, we can write
$$e^{-t\,(-\Delta_k)^{\alpha}}f(x)= f \ast_k h_{t,\alpha}(x)= \int_{\mathbb{R}^d}\tau^k_xh_{t,\alpha}(-y)f(y) \, d\mu_k(y),$$
where $\tau^k_x h_{t, \alpha}(-y)$ is called the \emph{fractional Dunkl heat kernel} and is given by
$$h_{t, \alpha}(x) = \mathcal{F}_k^{-1}(e^{-t|\cdot|^{2\alpha}})(x) .$$
Although we do not have an explicit expression for 
$h_{t, \alpha}$, it can  be written using the subordination formula as
$$h_{t, \alpha}(x)= \int_0^\infty h_s(x)\, \eta_{t,\alpha}(s) \, ds .$$
The above formula, together with \eqref{int of heat ker is 1} and the homogeneity of the measure $d\mu_k$, gives the estimate.
\begin{equation}\label{Lp norm of frac heat ker}
  \|h_{t, \alpha}\|_{L^p(d\mu_k)} \lesssim t^{-d_k/(2\,\alpha\,p')} , \text{ for any } 1\leq p \leq \infty. 
\end{equation}

\section{Proof of Theorem \ref{main thm linear part}}\label{Sec- linear Main Thm proof}
\subsection{Proof for the case $\alpha=1$}
We will first prove Theorem \ref{main thm linear part} for the case $\alpha = 1$ and then proceed with the complete proof. For this, let us define a quantity $N_1$, which is often called the first absolute moment, defined for any $f\in C_c^\infty(\mathbb{R}^d)$ by
$$N_1(f)= \int_{\mathbb{R}^d} |f(x)|\,|x|\, d\mu_k(x).$$
The following proposition plays a key role in our proof.
\begin{prop}\label{1st moment prop}
 Let $f \in L^1(d\mu_k)$  and $N_1(f)<\infty$. Then
 $$\|h_{t}\ast_k f - Mh_{t}\|_{L^1(d\mu_k)}  \lesssim N_1(f)\, t^{-1/2}$$
 $$\text{ and  } t^{d_k/2} \,\|h_{t}\ast_k f - Mh_{t}\|_{L^\infty(d\mu_k)}  \lesssim N_1(f)\, t^{-1/2}.$$
\end{prop}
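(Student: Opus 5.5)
We write the error as an integral of kernel differences. Here $M=\int f\,d\mu_k$. Since $h_t$ is radial and $\tau^k_x h_t(-y)=\tau^k_{-y}h_t(x)$, we have $h_t(x)=\tau^k_x h_t(-0)$ by property (i) of the Dunkl translation, that is, $\tau^k_0=I$. Hence
\begin{equation*}
h_t\ast_k f(x)-Mh_t(x)=\int_{\mathbb{R}^d}\bigl(\tau^k_x h_t(-y)-\tau^k_x h_t(0)\bigr)f(y)\,d\mu_k(y).
\end{equation*}
The natural split is by the size of $|y|$ compared with $\sqrt{t}$. In the region $|y|<\sqrt{t}$ I will use Theorem \ref{Dunkl heat ker esti}(ii) with $y'=0$. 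In the region $|y|\geq \sqrt{t}$ I will simply bound each of the two terms separately and pay with the factor $|y|/\sqrt{t}\geq 1$.

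\emph{$L^1$ bound.} By Minkowski, it suffices to show $\int|\tau^k_x h_t(-y)-\tau^k_x h_t(0)|\,d\mu_k(x)\lesssim |y|/\sqrt{t}$ uniformly in $y$.
\begin{itemize}
\item For $|y|\geq\sqrt{t}$: the $L^1(d\mu_k)$ norm of $x\mapsto\tau^k_{-y}h_t(x)$ is at most $\|h_t\|_{L^1}=1$, by positivity and the $L^1$-boundedness of translations on radial functions. So this case gives $\leq 2\leq 2|y|/\sqrt{t}$.
\item For $|y|<\sqrt{t}$: Theorem \ref{Dunkl heat ker esti}(ii) bounds the integrand by $\frac{|y|}{\sqrt{t}}\,\mu_k(B(y,\sqrt t))^{-1}e^{-c\,d_G(x,y)^2/t}$. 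We need $\int \mu_k(B(y,\sqrt t))^{-1}e^{-c\,d_G(x,y)^2/t}\,d\mu_k(x)\lesssim 1$. This follows by writing $d_G(x,y)=\min_\sigma|x-\sigma y|$, bounding the exponential by $\sum_{\sigma\in G}e^{-c|x-\sigma y|^2/t}$, and using $G$-invariance of $\mu_k$ with $\mu_k(B(\sigma y,\sqrt t))=\mu_k(B(y,\sqrt t))$. Each term is then controlled by the standard dyadic-annuli estimate using the doubling property: $\mu_k(B(y,2^j\sqrt t))\lesssim 2^{jd_k}\mu_k(B(y,\sqrt t))$.
\end{itemize}
Integrating against $|f(y)|\,d\mu_k(y)$ gives $\lesssim N_1(f)\,t^{-1/2}$.

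\emph{$L^\infty$ bound.} We now need $\sup_x|\tau^k_x h_t(-y)-\tau^k_x h_t(0)|\lesssim t^{-d_k/2}|y|/\sqrt t$.
\begin{itemize}
\item For $|y|<\sqrt t$: Theorem \ref{Dunkl heat ker esti}(ii) gives $\frac{|y|}{\sqrt t}\mu_k(B(y,\sqrt t))^{-1}$. Since $\mu_k(B(y,r))\gtrsim r^{d}\prod_\lambda r^{k(\lambda)}=r^{d_k}$ from the ball-volume formula, this is $\lesssim\frac{|y|}{\sqrt t}t^{-d_k/2}$.
\item For $|y|\geq\sqrt t$: Theorem \ref{Dunkl heat ker esti}(i) bounds each term by $\mu_k(B(\cdot,\sqrt t))^{-1}\lesssim t^{-d_k/2}\leq t^{-d_k/2}|y|/\sqrt t$.
\end{itemize}
Integrating in $y$ gives the second inequality.

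The main obstacle is the uniform integral estimate $\int\mu_k(B(y,\sqrt t))^{-1}e^{-c\,d_G(x,y)^2/t}\,d\mu_k(x)\lesssim1$, since $d_G$ is not a metric. The reduction to a sum over $G$ plus doubling should handle it. For the $L^1$ bound in the far region, the only care needed is to justify $\|\tau^k_{-y}h_t\|_{L^1}\leq 1$ via the radial boundedness of Dunkl translations.
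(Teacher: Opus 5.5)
Your proof is correct and follows the paper's argument. You use the same splitting at $|y|=\sqrt{t}$, Theorem \ref{Dunkl heat ker esti}(ii) with $y'=0$ in the near region, separate bounds on the two terms in the far region paid for by $|y|/\sqrt{t}\geq 1$, and the same $L^\infty$ argument.

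The only difference is in the $x$-integration for the $L^1$ bound. The paper dominates the kernels by $t^{-d_k/2}e^{-c|x|^2/t}$ in both regions. This is harmless for $|y|<\sqrt{t}$, since $d_G(x,y)\geq |x|-\sqrt{t}$. It is not justified for $|y|\geq\sqrt{t}$, where $\tau^k_x h_t(-y)$ is concentrated near the orbit $Gy$ rather than near the origin. Your use of $\|\tau^k_{-y}h_t\|_{L^1(d\mu_k)}\leq 1$ in the far region, and of the $G$-sum plus doubling estimate in the near region, handles this step correctly.
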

\begin{proof}
 We begin by using the properties of Dunkl translation to write 
 \begin{eqnarray*}
&&h_{t}\ast_k f(x) - Mh_{t}(x) \\
&=& \int_{\mathbb{R}^d} f(y) [ \tau^k_x h_t(-y) - \tau^k_x h_t(0)] \,d\mu_k(y)\\
&=& \int_{|y|<\sqrt{t}} f(y) \big[ \tau^k_x h_t(-y) - \tau^k_x h_t(0)\big] \,d\mu_k(y) + \int_{|y| \geq \sqrt{t}} f(y) \big[ \tau^k_x h_t(-y) - \tau^k_x h_t(0)\big] \,d\mu_k(y)\\
&=:& A_t + B_t.
 \end{eqnarray*}
Now, using Theorem \ref{Dunkl heat ker esti} (ii), we obtain
\begin{eqnarray*}
|A_t| &\lesssim & \int_{|y|<\sqrt{t}} |f(y)| \frac{|y|}{\sqrt{t}}\, t^{-d_k/2} e^{-c|x|^2/t}\, d\mu_k(y)\\
&=& t^{-1/2}\int_{|y|<\sqrt{t}} |f(y)y|\, t^{-d_k/2} e^{-c|x|^2/t}\, d\mu_k(y).
\end{eqnarray*}

Similarly, using Theorem \ref{Dunkl heat ker esti} (i) and the expression for $h_t$, we can write
\begin{eqnarray*}
|B_t| &\lesssim & \int_{|y|\geq \sqrt{t}} |f(y)| \big[ \frac{e^{-c|x|^2/t}}{\mu_k(B(y, \sqrt{t}))}+ \frac{e^{-|x|^2/(4t)}}{t^{d_k/2}}\big]\, d\mu_k(y)\\
&\leq& \int_{|y|\geq \sqrt{t}} |f(y)|\, \big[t^{-d_k/2} e^{-c|x|^2/t} + t^{-d_k/2} e^{-|x|^2/(4t)} \big]\, d\mu_k(y)\\
&\leq & t^{-1/2}\int_{|y|\geq \sqrt{t}} |f(y)y|\, \big[t^{-d_k/2} e^{-c|x|^2/t} + t^{-d_k/2} e^{-|x|^2/(4t)} \big]\, d\mu_k(y).
\end{eqnarray*}
Thus, by applying the above estimates for $A_t$ and $B_t$, along with the property  \eqref{int of heat ker is 1}, we finally get
\begin{eqnarray*}
 \|h_{t}\ast_k f - Mh_{t}\|_{L^1(d\mu_k)}  \lesssim t^{-1/2}\int_{\mathbb{R}^d} |f(y)y|\,d\mu_k(y)=N_1(f)\, t^{-1/2}  
\end{eqnarray*}
This completes the proof of the first estimate.
\par For the second estimate, we note that the estimates for $A_t$ and $B_t$ can be refined as
\begin{eqnarray*}
|A_t| &\lesssim & \int_{|y|<\sqrt{t}} |f(y)| \frac{|y|}{\sqrt{t}}\, t^{-d_k/2} e^{-c|x|^2/t}\, d\mu_k(y)\\
&\leq & t^{-(d_k+1)/2}\int_{|y|<\sqrt{t}} |f(y)y|\,d\mu_k(y)
\end{eqnarray*}
and 
\begin{eqnarray*}
|B_t| &\lesssim &  \int_{|y|\geq \sqrt{t}} |f(y)| \big[ \frac{e^{-c|x|^2/t}}{\mu_k(B(y, \sqrt{t}))}+ \frac{e^{-|x|^2/(4t)}}{t^{d_k/2}}\big]\, d\mu_k(y)\\
&\lesssim & \int_{|y|\geq \sqrt{t}} |f(y)| \frac{|y|}{\sqrt{t}}\big[ \frac{1}{t^{d_k/2}}+ \frac{1}{t^{d_k/2}}\big]\, d\mu_k(y)\\
&= & t^{-(d_k+1)/2}\int_{|y|\geq \sqrt{t}} |f(y)y|\,d\mu_k(y)
\end{eqnarray*}
Hence, the proof of the $L^\infty$ estimate follows in a similar manner.
\end{proof}
We are now prepared to establish Theorem \ref{main thm linear part} for $\alpha=1$.
\begin{proof}[Proof of Theorem \ref{main thm linear part} for $\alpha=1$] 
Since the function $h_t$ is radial, by Theorem \ref{Youngs ineq}, we have
\begin{equation}\label{Youngs ineq for heat ker}
    \|f \ast_k h_t\|_{L^r(d\mu_k)} \leq \|f\|_{L^p(d\mu_k)} \|h_t\|_{L^q(d\mu_k)},
\end{equation}
for any function $f\in L^p(d\mu_k)$ and for any $1 \leq p, q, r \leq \infty$, with the condition that ${1}/{p} + {1}/{q} = 1 + {1}/{r}.$
\par Now, with Proposition \ref{1st moment prop}, Young’s inequality \eqref{Youngs ineq for heat ker}, and the denseness of smooth functions with compact support in $L^1(d\mu_k)$ in hand, the remainder of the proof follows by repeating the exact same lines as in the classical case and is therefore omitted. We refer the reader to the references \cite[p. 20]{VazquezABMFTHE} and \cite[p. 7]{VazquezABFTFHEITES} for the details in the classical case.
\end{proof}
\subsection{Proof for the case $0<\alpha <1$}
Finally, by utilizing the already proven result for $\alpha = 1$, we can now complete the proof of Theorem \ref{main thm linear part} for any $0 < \alpha < 1$ as follows.

\begin{proof}[Proof of Theorem \ref{main thm linear part} for $0<\alpha < 1$] By applying the subordination formula and Minkowski's integral inequality, the $L^p$ norm transfers from the fractional heat case to the usual heat case as follows.
\begin{eqnarray*}
&& t^{d_k/(2\,\alpha\,p')}\|h_{t,\alpha}\ast_k u_0 - Mh_{t,\alpha}\|_{L^p(d\mu_k)}\\
&\leq &t^{d_k/(2\,\alpha\,p')}\int_0^\infty \|h_{s}\ast_k u_0 - Mh_{s}\|_{L^p(d\mu_k)}\,  \eta_{t,\alpha}(s) \,ds\\
&= &t^{d_k/(2\,\alpha\,p')}\int_0^{t} \|h_{s}\ast_k u_0 - Mh_{s}\|_{L^p(d\mu_k)}\,  \eta_{t,\alpha}(s) \,ds \\
&& + t^{d_k/(2\,\alpha\,p')}\int_{t}^\infty \|h_{s}\ast_k u_0 - Mh_{s}\|_{L^p(d\mu_k)}\,  \eta_{t,\alpha}(s) \,ds\\
&=: & I_t + J_t.
\end{eqnarray*}
Keeping \eqref{Youngs ineq for heat ker}, \eqref{int of heat ker is 1}, and the properties of the density function $\eta_{t, \alpha}$ in mind and taking $t > 1$, we first obtain the estimate
\begin{eqnarray*}
 I_t &\leq&  t^{d_k/(2\,\alpha\,p')}\int_0^{t} \|h_{s}\|_{L^p(d\mu_k)}\big[\| u_0\|_{L^1(d\mu_k)} + |M|\big]\,  \eta_{t,\alpha}(s) \,ds \\
 &\lesssim & t^{d_k/(2\,\alpha\,p')}\int_0^{t} s^{-d_k/(2\,p')} \, \eta_{t,\alpha}(s) \,ds\\
 &\lesssim & t^{d_k/(2\,\alpha\,p')}\int_0^{t} s^{-d_k/(2\,p')} \, t^{\,a_\alpha}(1+s^{b_\alpha})\,e^{-c_\alpha\, t^{\frac{1}{1-\alpha}}s^{-\frac{\alpha}{1-\alpha}}}\, ds. 
 \end{eqnarray*}
We next perform a sequence of changes of variables. Since the explicit exponents of $s$ and $t$ are not required for the estimate, we do not track them explicitly. Instead, we denote these exponents by real numbers
$a_j = a_j(\alpha), b_j = b_j(\alpha), j = 1,2,3,4.$ First, we replace $t^{\frac{\alpha}{1-\alpha}} s^{-\frac{\alpha}{1-\alpha}}$ by $s$. We then apply a further change of variables, replacing $c_\alpha t s$ by $s$. After these steps, we arrive at
 \begin{eqnarray*}
 I_t &\lesssim& t^{a_1}\int_1^\infty s^{b_1} (1+ t^{a_2}s^{b_2})\, e^{-c_\alpha t\,s}\, ds\\
 &\lesssim & t^{a_3}\int_{c_\alpha  t}^\infty s^{b_3} (1+ t^{a_4}s^{b_4})\, e^{-s}\, ds,
 \end{eqnarray*}
which goes to $0$ for large $t$.
\par Now for any positive $\epsilon$, by the case $\alpha = 1$, there exists a sufficiently large $T > 0$ such that
$$s^{d_k/(2\,p')} \|h_{s}\ast_k u_0 - Mh_{s}\|_{L^p(d\mu_k)}<\epsilon,$$
for any $s$ larger than $T$.
\par Hence, for the term $J_t$, we take $t > T$ and apply the above result to derive the required decay as follows.
\begin{eqnarray*}
J_t &<&  \epsilon\,t^{d_k/(2\,\alpha\,p')}\int_{t}^\infty  s^{-d_k/(2\,p')}\,  \eta_{t,\alpha}(s) \,ds \\  
&\leq & \epsilon\,\int_{t^{1-1/\alpha}}^{\infty} \eta_{1,\alpha}(s) \,ds 
\leq \epsilon\,\int_{0}^{\infty} \eta_{1,\alpha}(s) \,ds
\leq  \epsilon.
\end{eqnarray*}
This concludes the proof of the theorem.
\end{proof}


\section{Proof of Theorem \ref{main thm non-linear part}}\label{Sec- Nonlinear Main Thm proof}
In this section, we provide the proof of the asymptotic behavior of the solution to the non-linear equation \eqref{non-linear frac Dunkl heat eq}.
\begin{proof}[Proof of Theorem \ref{main thm non-linear part}]
For a reasonable solution $u$ of \eqref{non-linear frac Dunkl heat eq}, integrating \eqref{non-linear frac Dunkl heat eq} yields
\begin{equation}\label{double int become zero}
    \int_{\mathbb{R}^d}u_0(x)\, d\mu_k(x)= \int_{\mathbb{R}^d}u(x, t)\, d\mu_k(x) + \int_0^t\int_{\mathbb{R}^d}u^p(x, s)\, d\mu_k(x)\,ds.
\end{equation}
Therefore, the condition on $u_0$ implies that, for almost every $t \in (0, \infty)$, we have $u(\cdot, t) \in L^1(d\mu_k)$, and moreover, $u \in L^p(d\mu_k \, dt)$.

\par Now, from the integral representations of the solution to \eqref{non-linear frac Dunkl heat eq}, the solution $u$ can be expressed as
$$u(x,t) = h_{t,\alpha}\ast_k u_0(x) - \int_0^t h_{t-s,\alpha}\ast_k u^p(x, s)\, ds.$$
So, from the above and the non-negativity of Dunkl translations of radial functions, for any non-negative solution $u$ of \eqref{non-linear frac Dunkl heat eq}, the following inequality holds
\begin{equation}\label{nonlinear sol domi by lin sol}
    u(x, t) \leq h_{t,\alpha}\ast_k u_0(x) .
\end{equation}
From this pointwise estimate, \eqref{Youngs ineq for heat ker}, and \eqref{Lp norm of frac heat ker}, we can derive an $L^p$ estimate 
\begin{eqnarray*}
\|u(\cdot, t) \|_{L^p(d\mu_k)} &\leq & \|h_{t,\alpha}\ast_k u_0 \|_{L^p(d\mu_k)}\\
&\lesssim & \min \{ t^{-d_k/(2\,\alpha\,p')} \|u_0\|_{L^1(d\mu_k)},\|u_0\|_{L^p(d\mu_k)} \}\\
&=:& A(t, p, \alpha, u_0).
\end{eqnarray*}
Let us fix $\epsilon \in (0, 1]$ and consider the non-negative solution $u^\epsilon$ to the non-linear equation \eqref{non-linear frac Dunkl heat eq}, but with the initial condition $\epsilon\, u_0$ in place of $u_0$. Then, once again, by virtue of the non-negativity of Dunkl translations of radial functions, we get the comparison
$$u^\epsilon(x, t) \leq u(x,t).$$
Let $M^\epsilon_\infty$ denote the quantity 
$$M^\epsilon_\infty = \lim\limits_{t\to \infty}\int_{\mathbb{R}^d}u^\epsilon (x, t)\,d\mu_k(x).$$
We claim that for a sufficiently small $\epsilon$, $M^\epsilon_\infty >0$.

\par In fact, using the relation \eqref{double int become zero}, we can express $M^\epsilon_\infty$ as 
\begin{equation}\label{M infinity clacu}
  M^\epsilon_\infty = \epsilon \big[ \int_{\mathbb{R}^d}u_0(x)\, d\mu_k(x) - \frac{1}{\epsilon}\int_0^\infty \int_{\mathbb{R}^d}\big(u^\epsilon(x, t)\big)^p\, d\mu_k(x)\,dt \big].  
\end{equation}
Let us carefully look at the second term on the RHS of the above equation. The condition on $p$ gives
\begin{eqnarray*}
  &&\frac{1}{\epsilon}\int_0^\infty \int_{\mathbb{R}^d}\big(u^\epsilon(x, t)\big)^p\, d\mu_k(x)\,dt\\
  &\leq &  \frac{1}{\epsilon}\int_0^\infty \big( A(t, p, \alpha, \epsilon\,u_0)\big)^p\,dt\\
  &=&  \frac{\epsilon^p}{\epsilon}\int_0^\infty \big( A(t, p, \alpha, u_0)\big)^p\, dt\\
  &\leq & \epsilon^{p-1}\big[\int_0^1 \|u_0\|^p_{L^p(d\mu_k)} \,dt  + \int_1^\infty  t^{-d_k(p-1)/(2\,\alpha)} \|u_0\|^p_{L^1(d\mu_k)}\, dt\big]\\
  &\lesssim &\epsilon^{p-1}.
\end{eqnarray*}
which goes to $0$ as $\epsilon \to 0^+$.
\par Hence, from \eqref{M infinity clacu}, we can conclude that $M^\epsilon_\infty > 0$ for some sufficiently small $\epsilon$. As $u^\epsilon(x, t) \leq u(x, t)$, this completes the proof of the first part of the theorem.
\par For the second part, using the integral representation of the solution, along with Theorem \ref{Youngs ineq} and \eqref{Lp norm of frac heat ker} for $p=1$, we obtain that for any $t \geq t_0 \geq 0$,
\begin{eqnarray*}
 && \|u(\cdot, t) - M_\infty h_{t,\alpha}\|_{L^1(d\mu_k)}\\
 &\leq & \| u(\cdot, t) - h_{t-t_0,\alpha}\ast_k u(\cdot, t_0) \|_{L^1(d\mu_k)} + \| h_{t-t_0,\alpha}\ast_k u(\cdot, t_0)- M(t_0)  h_{t-t_0,\alpha}\|_{L^1(d\mu_k)} \\
 && +\, \| M(t_0)  h_{t-t_0,\alpha}- M(t_0)  h_{t,\alpha}\|_{L^1(d\mu_k)} + \| M(t_0)  h_{t,\alpha} - M_\infty  h_{t,\alpha}\|_{L^1(d\mu_k)}\\
 &\lesssim & \int_{t_0}^t \|u(\cdot, s)\|^p_{L^p(d\mu_k)}\, ds + \| h_{t-t_0,\alpha}\ast_k u(\cdot, t_0)- M(t_0)  h_{t-t_0,\alpha}\|_{L^1(d\mu_k)} \\
 && +\,| M(t_0)|\, \| h_{t-t_0,\alpha}-  h_{t,\alpha}\|_{L^1(d\mu_k)} + | M(t_0) - M_\infty|.
\end{eqnarray*}
Now, taking the limit as $t \to \infty$ and applying Theorem \ref{main thm linear part} with $u(\cdot, t_0)$ in place of $u_0$, we derive
\begin{eqnarray*}
    \lim_{t\to \infty}\|u(\cdot, t) - M_\infty h_{t,\alpha}\|_{L^1(d\mu_k)}&\lesssim \int_{t_0}^\infty \|u(\cdot, s)\|^p_{L^p(d\mu_k)}\, ds + | M(t_0) - M_\infty|
\end{eqnarray*}
and in the final step, taking $t_0 \to \infty$, we obtain the $L^1$ estimate 
$$\lim_{t\to \infty}\|u(\cdot, t) - M_\infty h_{t,\alpha}\|_{L^1(d\mu_k)}=0.$$
To prove the theorem for any $1 \leq q < \infty$, let $1 \leq r \leq \infty$. Then, for $1 \leq q < r$, by H\"olders inequality, \eqref{nonlinear sol domi by lin sol},  Theorem \ref{Youngs ineq}, and \eqref{Lp norm of frac heat ker}, we have
\begin{eqnarray*}
&& \|u(\cdot, t) - M_\infty h_{t,\alpha}\|_{L^q(d\mu_k)} \\
& \leq & \|u(\cdot, t) - M_\infty h_{t,\alpha}\|^{(r-q)/q(r-1)}_{L^1(d\mu_k)} \Big[\|u(\cdot, t)\|^{r(q-1)/q(r-1)}_{L^r(d\mu_k)} + \|M_\infty h_{t,\alpha}\|^{r(q-1)/q(r-1)}_{L^r(d\mu_k)}  \Big ]   \\
&\lesssim & \|u(\cdot, t) - M_\infty h_{t,\alpha}\|^{(r-q)/q(r-1)}_{L^1(d\mu_k)} \Big[\|u_0\|_{L^1(d\mu_k)} + |M_\infty |\Big ]\,t^{-d_k/(2\alpha q')}.
\end{eqnarray*}
This concludes the proof of the theorem, as the $L^1$ case has already been proved.
\end{proof}

\subsection*{Acknowledgments} 
The author is supported by Institute Postdoctoral Fellowship from IIT Bombay.

\section*{Declarations}

\subsection*{Competing interests}
The author declares that he has no competing interests.

\bibliographystyle{abbrv}

\bibliography{biblio}

\end{document}